\newtheorem{theorem}{Theorem}[section]
\newtheorem{lemma}[theorem]{Lemma}
\newtheorem{cor}[theorem]{Corollary}
\theoremstyle{definition}
\newtheorem{definition}[theorem]{Definition}
\newtheorem{example}[theorem]{Example}
\theoremstyle{remark}
\newcommand{\Fpbar}{\bar{\mathbb{F}}_p}
\newcommand{\Fp}{\mathbb{F}_p}
\newcommand{\B}{\mathcal{B}}
\newcommand{\fF}{{\mathfrak F}} 
\newcommand{\fH}{{\mathfrak H}} 
\newcommand{\fX}{{\mathfrak X}} 
\newcommand{\cser}{\mathcal{C}}
\newcommand{\nser}{\mathcal{N}}
\newcommand{\id}{\triangleleft}
\DeclareMathOperator{\Soc}{Soc}
\DeclareMathOperator{\pdef}{PDef}
\DeclareMathOperator{\Edef}{EDef}
\DeclareMathOperator{\Hom}{Hom}
\DeclareMathOperator{\Ext}{Ext}
\DeclareMathOperator{\sym}{{}^\text{sym}\mspace{-4mu}}
\DeclareMathOperator{\asym}{{}^\text{asym}\mspace{-4mu}}
\DeclareMathOperator{\nlen}{Nlen}
\DeclareMathOperator{\Leib}{Leib}
\begin{document}

\title{Conditions for a Schunck class to be a formation}

\author{Donald W. Barnes}
\address{1 Little Wonga Rd. Cremorne NSW 2090 Australia}
\email{donwb@iprimus.com.au}
\thanks{This work was done while the author was an Honorary Associate of the School
of Mathematics and Statistics, University of Sydney}

\subjclass[2010]{Primary 17B30; Secondary 17A32, 20D10}
\keywords{Lie algebras, formations}
\date{}

\begin{abstract} A Schunck class $\fH$ is determined by the class $\fX$ of primitives contained in $\fH$.  We give necessary and sufficient conditions on $\fX$ for $\fH$ to be a saturated formation.
\end{abstract}

\maketitle

\section{Introduction} \label{sec-Intro}
In \cite{Gasch}, Gasch\"utz introduced for finite soluble groups, the concepts of saturated formations and projectors.  He showed that if $\fF$ is a saturated formation, then $\fF$-projectors exist in every finite soluble group.  Saturated formations can be constructed via local definition, local in that for each prime $p$, there is a condition on the chief factors of $p$-power order for the group to be in the formation.

A class $\fH$ with the property that an $\fH$-projector exists in every finite soluble group is called a Schunck class.  Schunck showed that such a class $\fH$ is determined by the class $\fX$ of all primitive groups in $\fH$.  The class $\fX$ necessarily has the property that every primitive quotient of a group in $\fX$ is also in $\fX$.  Conversely, given such a class $\fX$, the class 
$$\fH = \pdef(\fX) = \{G \mid \text{every primitive quotient of }G\text{ is in }\fX\}$$
is a Schunck class, said to be primitively defined by $\fX$.  This theory is set out in detail in Doerk and Hawkes \cite{DH}.

Analogous theories have been developed for Lie algebras \cite{BGH}, restricted Lie algebras  \cite{Restricted}, and for Leibniz algebras \cite{SchunckLeib}.   In general outline, the theories are very similar, but while every saturated formation of soluble groups has many local definitions, a saturated formation of Lie algebras has at most one local definition.  (See \cite{local}.)  That not every saturated formation has a local definition means that we cannot use local definition to investigate all saturated formations of Lie algebras.  Every saturated formation, being a Schunck class, has a primitive definition.  To use this, we need to know when the Schunck class defined by the class $\fX$ of primitives is a formation.  The conditions on $\fX$ are essentially the same for groups, Lie algebras, restricted Lie algebras and Leibniz algebras.  So in the following, we treat all these together.  We use ``algebra'' to mean any of  group, Lie algebra, restricted Lie algebra or Leibniz algebra, all assumed to be finite or finite-dimensional and soluble.  Except for matters specific to groups, we write in the language of algebras.  A primitive is an algebra $P$ with a minimal ideal $K$ such that its centraliser $\cser_P(K)=K$.  In all cases, $P$ splits over $K$ and all the complements are conjugate.

In Section \ref{gps}, we establish for groups analogues of \cite[Lemmas 1.2,  1.7,  1.9, Theorems 2.1,  2.6]{Blocks}.  There is no need to do this for Leibniz algebras or for restricted Lie algebras.  The blocks for a restricted Lie algebra $(L,[p])$ are the same as for its underlying Lie algebra $L$.  

If $P$ is a primitive Leibniz algebra with $\Soc(P) = A$, then $P/A$ is a Lie algebra and, as $L/A$-module, $A$ is either symmetric, that is, $ax = -xa$ for $x \in P/A$ and $a \in A$, or asymmetric, that is, $ax=0$.   From the given left action on $A$ we can form the symmetric and asymmetric modules $\sym A$ and $\asym A$ and their split extensions by $P/A$, the primitive algebras $\sym P$ and $\asym P$, the given primitive algebra $P$ being one of these.  Thus primitive Leibniz algebras come in pairs $\{\sym P, \asym P \}$ with $\sym P$ a Lie algebra while $\asym P$ has Leibniz kernel $\Leib (\asym P) = \Soc(\asym P)$.  A saturated formation $\fF$ of Leibniz algebras containing one member of a pair also contains the other and is determined by  the Lie algebras in $\fF$.  See \cite[Theorem 3.16, Corollary 3.17]{SchunckLeib}.  Consequently, our main result for Leibniz algebras follows immediately from the result for Lie algebras.

In Section \ref{cdits}, we establish for all cases, the conditions on the class $\fX$ of primitives for $\pdef(\fX)$ to be a saturated formation.

\section{Finite groups}\label{gps}
If $A/B$ is a $p$-chief factor of the group $G$, it can be regarded as an $\Fp G$-module.  To avoid confusion between the multiplicative notation used for the group and additive notation for the module, we denote the module by $[A/B]$ and the module element corresponding to the element $a \in A/B$ by $[a]$.  Thus $[a_1a_2]= [a_1] + [a_2]$.  The action of $g \in G$ on $[a]$ is given by $g[a] = [gag^{-1}]$.

The results of this section do not need the full power of the assumption, required for their applications in the next section, that the group be soluble, so solubility is not assumed here.

\begin{lemma} \label{gp-sole} Suppose that $A$ of $p$-power order is the only minimal normal subgroup of the $p$-soluble group $G$.  Suppose that $G$ does not split over $A$.  Let $B/A$ be a minimal normal subgroup of $G/A$.  Then $B/A$ is a $p$-group and
\begin{enumerate}
\item If $B$ is not abelian, then $[A]$ is a quotient of $[B/A] \otimes [B/A]$,
\item If $B$ is abelian but not of exponent $p$, then $[A] \simeq [B/A]$.
\end{enumerate}
\end{lemma}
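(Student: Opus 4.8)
The plan is to establish first that $B/A$ is a $p$-group, and then to read off (1) and (2), both of which become transparent once $B$ itself is a $p$-group. I would begin by recording the two irreducibility facts I will need. A minimal normal $p$-subgroup is elementary abelian (its derived subgroup and Frattini subgroup are characteristic, hence normal in $G$, hence trivial by minimality), so $[A]$ is an irreducible $\Fp G$-module. Likewise $[B/A]$ is irreducible: the conjugation action of $G$ on $B/A$ factors through $G/A$, and its submodules are precisely the subgroups of $B/A$ that are normal in $G/A$, of which minimality leaves only $0$ and $[B/A]$.

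Next I would rule out $B/A$ being a $p'$-group. As $G$ is $p$-soluble, the chief factor $B/A$ is a $p$-group or a $p'$-group, so suppose the latter. Then $A$ is a normal $p$-subgroup of $B$ with $B/A$ of coprime order, so Schur--Zassenhaus gives a complement $C$ to $A$ in $B$, and since $A$ is abelian all such complements are $B$-conjugate. A Frattini argument then yields $G = A\,N_G(C)$: for $g \in G$ the subgroup $C^g$ is again a complement to $A$ in $B$, so $C^g = C^b$ with $b \in B = AC$, whence $g \in N_G(C)A$. Now $A \cap N_G(C)$ is normalised by $A$ and by $N_G(C)$, hence normal in $G$ and contained in $A$, so by minimality it is $1$ or $A$. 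If it is $1$, then $N_G(C)$ complements $A$ in $G$, contradicting that $G$ does not split over $A$; if it is $A$, then $A$ normalises $C$, so $C \trianglelefteq B$ and $C = O_{p'}(B)$ is a nontrivial normal $p'$-subgroup of $G$ meeting $A$ trivially, contradicting that the unique minimal normal subgroup $A$ lies in every nontrivial normal subgroup. I expect this step to be the main obstacle: it is where the non-splitting hypothesis and the uniqueness of $A$ are both used, and where the two sub-cases must each be forced to a contradiction. With the $p'$ case excluded, $B/A$ and hence $B$ is a $p$-group.

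For (1), assuming $B$ non-abelian, I would note that $[B,B]$ is a nontrivial characteristic subgroup of $B$ inside $A$ (as $B/A$ is abelian), so $[B,B]=A$ by minimality; and that $A \le Z(B)$, since $\gamma_3(B) = [A,B]$ is normal in $G$ and properly smaller than the nonabelian $\gamma_2(B)=A$, hence trivial. With $A$ central and $B/A$ of exponent $p$, the commutator map $(x,y)\mapsto[x,y]$ is $\Fp$-bilinear and $G$-equivariant from $B/A\times B/A$ to $A$, so it induces a $G$-module homomorphism $[B/A]\otimes[B/A]\to[A]$ with image the subgroup generated by all commutators, namely $[B,B]=A$; thus $[A]$ is a quotient of $[B/A]\otimes[B/A]$.

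For (2), assuming $B$ abelian but not of exponent $p$, I would use the $p$-power endomorphism $x\mapsto x^p$ of $B$: its image $B^p$ is a nontrivial characteristic subgroup inside $A$, so $B^p=A$ by minimality, and since $A$ has exponent $p$ the map kills $A$ and descends to a $G$-equivariant $\Fp$-linear surjection $\phi\colon[B/A]\to[A]$. As both modules are irreducible and $\phi\neq0$, its kernel is $0$ and $\phi$ is an isomorphism, giving $[A]\simeq[B/A]$.
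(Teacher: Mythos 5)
Your proof is correct and takes essentially the same route as the paper's: Schur--Zassenhaus plus a Frattini argument to force $B/A$ to be a $p$-group, the commutator map $[B/A]\otimes[B/A]\to[A]$ for (1), and the $p$-th power map $[B/A]\to[A]$ for (2). The only differences are cosmetic: where the paper invokes $A\subseteq\Phi(G)$ to conclude that the complement's normaliser is all of $G$, you instead do a case analysis on $A\cap N_G(C)$, and you spell out details the paper leaves implicit (centrality of $A$ in $B$, and surjectivity via $[B,B]=A$, respectively $B^p=A$).
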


\begin{proof} If  $B/A$ is not a $p$-group, then $|B/A|$ is prime to $p$.  By the Schur-Zassenhaus Theorem, there exists a complement $U$ to $A$ in $B$, and, if $V$ is another complement, then $V = aUa^{-1}$ for some $a \in A$.  By the Frattini argument, $G$ is generated by $A$ and the normaliser $\nser_G(U)$ of $U$.  But $A \subseteq \Phi(G)$, so $\nser_G(U) = G$, contrary to $A$ being the only minimal normal subgroup of $G$.  Hence $B$ is a $p$-group.

Suppose that $B$ is not abelian.  Let $\bar{b}_i \in B/A$.  The map $[\bar{b}_1] \otimes [\bar{b}_2] \mapsto [b_1b_2b_1^{-1}b_2^{-1}]$ is a module homomorphism.  As $[A]$ is irreducible, $[A]$ is a quotient of $[B/A] \otimes [B/A]$.

Suppose that $B$ is abelian of exponent $p^2$.  Then the map $[\bar{b}] \mapsto [b^p]$ is an isomorphism.
\end{proof}

\begin{lemma} \label{qgpblock} Suppose that $N \id G$ and $V,W$ are irreducible $\Fp(G/N)$-modules in the same block.  Then $V, W$ are in the same $\Fp G$-block.
\end{lemma}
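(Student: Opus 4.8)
The plan is to use the standard characterisation of blocks by the linking of irreducible modules through non-split extensions: two irreducible modules $S,T$ over a group algebra lie in the same block if and only if there is a chain $S=S_0,S_1,\dots,S_n=T$ of irreducibles in which each consecutive pair is linked, meaning that $\Ext^1(S_i,S_{i+1})\neq 0$ or $\Ext^1(S_{i+1},S_i)\neq 0$. Thus the blocks are the connected components of the graph on the irreducibles whose edges record non-vanishing of $\Ext^1$ in either direction.

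The key step is then to compare extensions over $\Fp(G/N)$ with extensions over $\Fp G$ under inflation along $G\to G/N$. I would first note that inflation sends an irreducible $\Fp(G/N)$-module to an irreducible $\Fp G$-module, since the $\Fp G$-submodules of an inflated module coincide with its $\Fp(G/N)$-submodules. I would then establish that the inflation map
$$\Ext^1_{\Fp(G/N)}(V,W)\longrightarrow \Ext^1_{\Fp G}(V,W)$$
is injective. This can be seen directly: such an extension class is represented by a sequence $0\to W\to E\to V\to 0$ of $\Fp(G/N)$-modules, on all of which $N$ acts trivially; if the inflated sequence splits over $\Fp G$, the splitting $V\to E$ is automatically $\Fp(G/N)$-linear, so the class was already zero. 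Equivalently, identifying $\Ext^1$ with the first cohomology of $\Hom_{\Fp}(V,W)$, this is the injectivity of inflation in degree one in the Lyndon--Hochschild--Serre sequence, valid because $N$ acts trivially on $\Hom_{\Fp}(V,W)$.

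With these facts the conclusion follows at once. If $V$ and $W$ lie in the same block of $\Fp(G/N)$, take a linking chain $V=V_0,\dots,V_n=W$ of irreducible $\Fp(G/N)$-modules. Each $V_i$ inflates to an irreducible $\Fp G$-module, and injectivity of the inflation map on $\Ext^1$ shows that consecutive $V_i$ stay linked over $\Fp G$. Hence all the $V_i$, and in particular $V$ and $W$, lie in a common block of $\Fp G$.

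The only point I expect to need care is the first one: confirming that the block-linking description is precisely the notion of block intended here, so that a single non-split extension genuinely forces membership in a common block. Once that is in place, the inflation comparison is routine, inflation being an exact, fully faithful embedding onto the $\Fp G$-modules on which $N$ acts trivially.
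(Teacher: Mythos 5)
Your proposal is correct and follows essentially the same route as the paper: inflate a linking chain of irreducibles and non-split extensions from $\Fp(G/N)$ to $\Fp G$, noting that inflation preserves irreducibility and non-splitness (the latter because an $\Fp G$-splitting of a module with trivial $N$-action is automatically $\Fp(G/N)$-linear). The paper simply asserts these two preservation facts where you spell them out via injectivity of inflation on $\Ext^1$.
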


\begin{proof} 
There is a chain $V = V_0, V_1, \dots, V_n = W$ of irreducible $\Fp (G/N)$-modules and non-split extensions $X_i$ of either $V_{i-1}$ by $V_i$ or of $V_i$ by $V_{i-1}$ linking $V$ and $W$.  But the $V_i$ are irreducible $\Fp G$-modules and the $X_i$ are non-split $\Fp G$-modules linking $V$ to $W$ as $\Fp G$-modules.
\end{proof}

Denote the dual $\Hom(V, \Fp)$ of the module $V$ by $V^*$.  Denote the principal $\Fp G$-block by $\B_0(\Fp G)$.

\begin{lemma}\label{dual}  Suppose $V \in \B_0(\Fp G)$.   Then
$V^* \in \B_0(\Fp G)$. 
\end{lemma}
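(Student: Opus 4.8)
The plan is to exploit the fact that the contravariant duality functor $V \mapsto V^* = \Hom(V, \Fp)$ is exact and preserves the linkage relation that governs block membership. Recall that $V \in \B_0(\Fp G)$ precisely when there is a chain $V = V_0, V_1, \dots, V_n = \Fp$ of irreducible $\Fp G$-modules in which consecutive terms are joined by non-split extensions (the same notion of linkage used in the proof of Lemma \ref{qgpblock}), the trivial module $\Fp$ being the distinguished member of the principal block. So it suffices to show that dualising such a chain produces another admissible chain terminating at $\Fp$.

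First I would record the properties of duality I intend to use: that $(-)^*$ is an exact contravariant functor on finite-dimensional $\Fp G$-modules with $V^{**} \cong V$; that it carries irreducible modules to irreducible modules; and that the trivial module is self-dual, $\Fp^* \cong \Fp$, since $G$ acts trivially on $\Hom(\Fp, \Fp)$. Next I would verify the essential point, that duality converts a short exact sequence $0 \to A \to X \to B \to 0$ into a short exact sequence $0 \to B^* \to X^* \to A^* \to 0$, and that the dual sequence splits if and only if the original does. The forward direction is immediate, since $X \cong A \oplus B$ forces $X^* \cong A^* \oplus B^*$; the converse follows by dualising once more and invoking $V^{**} \cong V$.

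With these in hand, the conclusion is quick. Given $V \in \B_0(\Fp G)$, take a linking chain $V = V_0, V_1, \dots, V_n = \Fp$ with non-split linking extensions $X_i$. Applying $(-)^*$ yields a chain $V^* = V_0^*, V_1^*, \dots, V_n^* = \Fp^* = \Fp$ of irreducible $\Fp G$-modules joined by the non-split extensions $X_i^*$. This exhibits $V^*$ as linked to the trivial module, so $V^* \in \B_0(\Fp G)$.

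The only step where anything can go wrong is the claim that duality sends non-split extensions to non-split extensions; everything else is formal. That claim is where the finite-dimensionality is used in an essential way, through the natural isomorphism $V^{**} \cong V$, which is what makes splitting a symmetric, self-dual condition and hence lets one reverse the implication.
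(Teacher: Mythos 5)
Your proposal is correct and follows the same route as the paper: dualise the linking chain from $V$ to $\Fp$, noting that duality preserves irreducibility, fixes the trivial module, and reverses (hence preserves the non-splitness of) extensions. The paper states this more tersely, leaving implicit the point you carefully justify via $V^{**} \cong V$, namely that the dual of a non-split extension is again non-split.
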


\begin{proof}  There exists a sequence $\Fp = A_0, A_1, \dots, A_n = V$
of irreducible modules and a sequence $X_1, \dots, X_n$ of non-split extensions $X_i$
either of $A_{i-1}$ by $A_i$ or of $A_i$ by $A_{i-1}$.  Dualising this gives a sequence
$F = A^*_0, A^*_1, \dots, A^*_n = V^*$ of irreducible modules and a sequence $X^*_1, \dots, X^*_n$ of non-split extensions $X^*_i$ either of
$A^*_i$ by $A^*_{i-1}$ or of  $A^*_{i-1}$ by $A^*_i$.
\end{proof} 

\begin{lemma} \label{diffK} Suppose that $A$ is a minimal normal subgroup of the $p$-soluble group $G$.  Let $V,W$ be irreducible $\Fp G$-modules.  Suppose that $A$ acts trivially on $V$ and non-trivially on $W$.  Then every extension of $V$ by $W$ or of $W$ by $V$ splits.
\end{lemma}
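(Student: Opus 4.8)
The plan is to exploit the normal subgroup $A$ by restricting the given extension to $A$, splitting it there, and descending the splitting to $G$. The first thing I would observe is that the hypotheses secretly force $A$ to be a $p'$-group. Indeed, $A$ is a minimal normal subgroup of the $p$-soluble group $G$, so $A$ is an elementary abelian $r$-group for some prime $r$. If $r = p$, then $A$ is a nontrivial normal $p$-subgroup, and a nontrivial $p$-group always has nonzero fixed points on a nonzero $\Fp$-module, so $W^A \neq 0$. Since $A \id G$, the fixed-point space $W^A$ is an $\Fp G$-submodule of the irreducible module $W$; hence $W^A = W$ and $A$ acts trivially on $W$, contradicting the hypothesis. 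Therefore $r \neq p$ and $A$ has order prime to $p$.

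With $A$ a $p'$-group, $\Fp A$ is semisimple by Maschke's theorem, so on restriction to $A$ any $\Fp G$-module $X$ decomposes as $X = X^A \oplus X_1$, where $X^A$ is the trivial isotypic component and $X_1$ is the sum of the nontrivial isotypic components. The next step is to note that both summands are in fact $\Fp G$-submodules: since $A \id G$, the group $G$ permutes the $A$-isotypic components of $X$ by conjugation, fixing the trivial one and permuting the nontrivial ones among themselves (the standard Clifford-theoretic observation). Applied to the irreducible modules themselves, this gives $V = V^A$, since $A$ acts trivially on $V$, and $W^A = 0$, as just shown, so $W$ is entirely of nontrivial $A$-type.

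I would then treat the two extensions in turn. For an extension $0 \to V \to X \to W \to 0$, applying the functor of $A$-fixed points and using $W^A = 0$ shows $X^A \subseteq V$, while $V \subseteq X^A$ because $A$ acts trivially on $V$; hence $X^A = V$, and the $\Fp G$-submodule $X_1$ maps isomorphically onto $W$, providing a $G$-complement. For an extension $0 \to W \to X \to V \to 0$, the quotient $V$ is $A$-trivial, so $[A,X] \subseteq W$; since $X_1$ is a nontrivial semisimple $A$-module it satisfies $X_1 = [A,X_1] = [A,X] \subseteq W \subseteq X_1$, forcing $X_1 = W$, and then $X^A$ is a $G$-complement. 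In both cases the extension splits.

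I expect the only real subtlety to be the opening reduction: recognising that ``$A$ acts non-trivially on an irreducible module'' is incompatible with $A$ being a $p$-group. Once $A$ is known to have order prime to $p$, the argument is a routine application of semisimplicity over $A$ together with the fact that $G$ respects the isotypic decomposition, and the splitting descends automatically because each isotypic piece is already an $\Fp G$-submodule.
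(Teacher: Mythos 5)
Your proof is correct in substance, but it takes a genuinely different and more elementary route than the paper. The paper works cohomologically: it shows $H^0(A,\Hom(V,W))$ and $H^1(A,\Hom(V,W))$ both vanish (and likewise for $\Hom(W,V)$), then invokes the Hochschild--Serre spectral sequence to get $H^1(G,\Hom(V,W))=0$, and it retains a second case in which $A$ is a $p$-group, arguing there that $H^1(A,W_i)=0$ for the non-trivial irreducible $A$-summands $W_i$ of $W$. Your opening observation --- a normal $p$-subgroup has nonzero fixed points on any nonzero $\Fp$-module, and $W^A$ is a $G$-submodule, so $A$ acting non-trivially on the irreducible $W$ rules out the $p$-group alternative --- shows that this second case of the paper is in fact vacuous (a $p$-group has no non-trivial irreducible $\Fp$-modules at all), which is a genuine clarification. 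Once $|A|$ is prime to $p$, you avoid cohomology entirely: Maschke plus the fact that $G$ permutes the $A$-isotypic components (so the trivial component $X^A$ and the sum $X_1$ of non-trivial components are $\Fp G$-submodules) produces explicit $G$-complements in both extension directions, and both of those verifications are sound. What the paper's approach buys is brevity given machinery it reuses anyway (the same $H^0/H^1$-plus-Hochschild--Serre pattern reappears in Theorem \ref{chiefs}); what yours buys is a self-contained argument that makes the structural reason for the splitting visible.

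One inaccuracy to repair: you assert that a minimal normal subgroup of a $p$-soluble group is an elementary abelian $r$-group for some prime $r$. That is false in general --- such a subgroup is a direct product of isomorphic simple groups, and these may be non-abelian simple $p'$-groups (for instance $A_5$ as a minimal normal subgroup of a $7$-soluble group). The correct dichotomy, which is what the paper itself uses, is that $A$ is either an elementary abelian $p$-group or a $p'$-group. Your fixed-point argument eliminates the first alternative exactly as written, and the rest of your proof uses only that $|A|$ is prime to $p$, never that $A$ is abelian, so the repair costs one sentence and nothing else changes.
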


\begin{proof}
As $A$-module, $V$ is the direct sum of $\dim(V)$ copies of $\Fp$, while $W$ is the direct sum of conjugate non-trivial irreducible $A$-modules $W_i$.  Thus $\Hom(V,W)$ is a direct sum of non-trivial irreducible $A$-modules, so $H^0(A,\Hom(V,W)) = 0$.  If $A$ is not a $p$-group, then $|A|$ is prime to $p$ and $H^1(A, \Hom(V,W)) = 0$, so we may suppose that $A$ is a $p$-group.  Since $A$ is an abelian $p$-group acting non-trivially on the irreducible module $W_i$, we have $H^1(A, W_i) = 0$.  So again we have $H^n(A, \Hom(V,W)) = 0$ for $n=0,1$. By the Hochschild-Serre spectral sequence, it follows that $H^1(G, \Hom(V,W)) = 0$.  So every module extension of $W$ by $V$ splits.  As $\Hom(W,V)$ is a direct sum of copies of the duals of the $W_i$, similarly we have that every extension of $V$ by $W$ splits.   
\end{proof}

\begin{cor} \label{B0ker} Suppose that $A$ is a minimal normal subgroup of the $p$-soluble group $G$ and that $V$ is an irreducible $\Fp G$-module in the principal block.  Then $A$ acts trivially on $V$.
\end{cor}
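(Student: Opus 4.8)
The plan is to exploit the description of principal-block membership by chains of non-split extensions, exactly as it was used in the proofs of Lemmas \ref{qgpblock} and \ref{dual}. By definition $V \in \B_0(\Fp G)$ means there is a sequence of irreducible $\Fp G$-modules $\Fp = A_0, A_1, \dots, A_n = V$ and a sequence $X_1, \dots, X_n$ of non-split extensions, each $X_i$ being a non-split extension either of $A_{i-1}$ by $A_i$ or of $A_i$ by $A_{i-1}$. So it suffices to track the behaviour of $A$ along this chain.

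The key observation is the contrapositive of Lemma \ref{diffK}. That lemma says that if $A$ acts trivially on one of two irreducible modules and non-trivially on the other, then every extension between them, in either direction, splits. Hence two irreducible modules joined by a non-split extension must have the \emph{same} status with respect to $A$: either $A$ acts trivially on both, or non-trivially on both. In particular, for each $i$, since $X_i$ is non-split, $A$ acts trivially on $A_{i-1}$ if and only if $A$ acts trivially on $A_i$. Note that because Lemma \ref{diffK} already asserts splitting in both directions, the orientation of each extension $X_i$ is irrelevant, so no separate case analysis is needed here.

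It then remains to run the induction along the chain. The base case is immediate: $A$ acts trivially on $A_0 = \Fp$, the trivial module. The inductive step is precisely the biconditional established above: triviality of the $A$-action propagates from $A_{i-1}$ to $A_i$. Carrying this through all $n$ steps yields that $A$ acts trivially on $A_n = V$, which is the claim.

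I do not expect a genuine obstacle, since Lemma \ref{diffK} does all the real work; the one point to state carefully is the reduction to the chain description of $\B_0(\Fp G)$ and the invariance argument, rather than attempting any direct cohomological computation for $V$ itself.
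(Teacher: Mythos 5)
Your proposal is correct and is exactly the paper's own argument: the paper's proof is a one-line version of it, propagating triviality of the $A$-action along the chain of non-split extensions linking $\Fp$ to $V$ via (the contrapositive of) Lemma \ref{diffK}. You have merely made the induction and the irrelevance of the orientation of each extension explicit.
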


\begin{proof} As $A$ acts trivially on $\Fp$, it follows by Lemma \ref{diffK}, that $A$ acts trivially on every irreducible in a chain linking $\Fp$ to $V$.
\end{proof}

\begin{lemma} \label{compl} Let $A/B$ be a complemented $p$-chief factor of the $p$-soluble group $G$.  Then $[A/B] \in \B_0(\Fp G)$.
\end{lemma}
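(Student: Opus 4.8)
The plan is to reduce to the case $B = 1$ and then exhibit a single non-split extension linking $[A/B]$ to the trivial module. First, since $B \id G$ we have $[A,B] \subseteq B$, so $B$ acts trivially on $A/B$ and $[A/B]$ may be regarded as an irreducible $\Fp(G/B)$-module (irreducible because $A/B$ is a minimal normal subgroup of $G/B$). By Lemma \ref{qgpblock} applied with $N = B$, it suffices to prove $[A/B] \in \B_0(\Fp(G/B))$, since the trivial $\Fp(G/B)$-module inflates to the trivial $\Fp G$-module in the principal block. Replacing $G$ by $G/B$ and $A$ by $A/B$, I may therefore assume that $A$ is a complemented minimal normal subgroup of $G$ of $p$-power order, say $G = AH$ with $A \cap H = 1$; being a characteristically simple $p$-group, $A$ is elementary abelian.

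Next I would use the complement to pass to a convenient module. Since $G = AH$ with $A \cap H = 1$, each coset of $H$ has a unique representative in $A$, and a short computation shows that the permutation module $\Fp[G/H]$ is isomorphic as an $\Fp G$-module to the group algebra $\Fp A$, on which $A$ acts by left multiplication and $H$ by the automorphisms induced by conjugation. Let $J$ be the augmentation ideal of $\Fp A$; it is $H$-invariant, hence an $\Fp G$-submodule, and so is $J^2$. The map $a \mapsto (a-1) + J^2$ then shows that $A$ acts trivially on $J/J^2$ and that $J/J^2 \cong [A]$ as $\Fp G$-modules, the $H$-action matching the conjugation action on $A$.

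I would then set $M = \Fp A / J^2$, obtaining a short exact sequence $0 \to [A] \to M \to \Fp \to 0$ of $\Fp G$-modules, where the quotient $\Fp A / J \cong \Fp$ is the augmentation. The key point is that this extension is non-split: it then links $[A]$ to $\Fp$ and so places $[A]$ in $\B_0(\Fp G)$, which together with the reduction above completes the proof. To establish non-splitness I would restrict to $A$. As an $A$-module, $M$ is $\Fp A / J^2$ with $A$ acting by left multiplication; this is a module over the local ring $\Fp A / J^2$, hence indecomposable, of dimension $1 + \dim [A] \geq 2$. A splitting of $M$ as an $\Fp G$-module would restrict to a decomposition of $M$ into a sum of trivial $A$-modules, contradicting indecomposability. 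This restriction argument is the main obstacle, and the crux is the observation that \emph{both} composition factors of $M$ become trivial on restriction to $A$, so that any $G$-splitting would already be visible at the level of $A$, where it is forbidden.
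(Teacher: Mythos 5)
Your proof is correct, but it takes a genuinely different route from the paper's. The paper disposes of this lemma in a single line by citing \cite[Theorem 1]{H1G}: for a complemented $p$-chief factor $A/B$ of a $p$-soluble group, $\Ext^1_{\Fp G}(\Fp,[A/B]) = H^1(G,[A/B]) \ne 0$, which immediately links $[A/B]$ to the trivial module. You instead build the required non-split extension by hand: reduce to $B=1$ via Lemma~\ref{qgpblock}, identify the permutation module $\Fp[G/H]$ with $\Fp A$, and exhibit $0 \to [A] \to \Fp A/J^2 \to \Fp \to 0$, proving non-splitness by restricting to $A$, where both composition factors are trivial but $\Fp A/J^2$ is not. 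This is in effect a self-contained proof of the relevant case of the theorem the paper quotes; notably, your argument nowhere uses $p$-solubility --- only the existence of the complement --- so it is both more elementary and slightly more general, at the cost of length. What the paper's approach buys is brevity, delegating the cohomological content to \cite{H1G} just as it delegates other such facts to \cite{Blocks}.

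One sentence of yours should be tightened: ``a module over the local ring $\Fp A/J^2$, hence indecomposable'' is not right as stated, since a free module of rank two over a local ring is decomposable. What you actually use is that $M$ is the \emph{regular} module (free of rank one) over the local ring $\Fp A/J^2$, and a local ring has no idempotents other than $0$ and $1$, so its regular module is indecomposable. Alternatively you can bypass indecomposability entirely: a $G$-splitting $M \cong [A] \oplus \Fp$ would force $A$ to act trivially on $M$ (a direct sum of trivial $A$-modules is trivial), whereas $a \cdot \bar{1} = \bar{a} \ne \bar{1}$ in $\Fp A/J^2$ for $a \ne 1$, because $a-1 \notin J^2$. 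Either repair is immediate, so this is a matter of wording rather than a gap.
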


\begin{proof}  By \cite[Theorem 1]{H1G}, $\Ext^1_{\Fp G}(\Fp,V) = H^1(G,V) \ne 0$.
\end{proof}

\begin{theorem} \label{chiefsB0} Let $A/B$ be a $p$-chief factor of the $p$-soluble group $G$.  Then $[A/B] \in \B_0(\Fp G)$.
\end{theorem}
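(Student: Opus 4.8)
The plan is to argue by induction on $|G|$, repeatedly using Lemma \ref{qgpblock} to pass to smaller quotients until the chief factor becomes the unique minimal normal subgroup, and then to separate the complemented and Frattini cases. First I would reduce to the case $B=1$. Since $B\triangleleft G$ and the chief factor $A/B$ is abelian, $[B,A]\subseteq B$, so $B$ acts trivially on $[A/B]$ and $[A/B]$ is the module of the $p$-chief factor $A/B$ of $G/B$. If $B\neq 1$, induction gives $[A/B]\in\B_0(\Fp(G/B))$, and Lemma \ref{qgpblock} lifts this to $\B_0(\Fp G)$. Hence I may assume $A$ is a minimal normal $p$-subgroup of $G$.

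Next I would arrange that $A$ is the only minimal normal subgroup. If $M\neq A$ is another one, then $[M,A]\subseteq M\cap A=1$, so $M$ acts trivially on $[A]$ and $[A]$ is inflated from the module of the minimal normal subgroup $AM/M\cong A$ of $G/M$; induction and Lemma \ref{qgpblock} again settle this. So assume $A$ is the unique minimal normal subgroup. If $G$ splits over $A$, then $A$ is a complemented $p$-chief factor and Lemma \ref{compl} gives $[A]\in\B_0(\Fp G)$ at once.

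There remains the case where $A$ is the unique minimal normal subgroup and $G$ does not split over $A$ --- precisely the hypotheses of Lemma \ref{gp-sole}. Here $G\neq A$, so I may choose a minimal normal subgroup $B/A$ of $G/A$, which Lemma \ref{gp-sole} shows is a $p$-group. As $A\triangleleft G$ gives $[A,B]\subseteq A$, $A$ acts trivially on $[B/A]$, so $[B/A]$ is the inflation of the module of the minimal normal subgroup $B/A$ of $G/A$; since $|G/A|<|G|$, induction gives $[B/A]\in\B_0(\Fp(G/A))$ and Lemma \ref{qgpblock} yields $[B/A]\in\B_0(\Fp G)$. It then suffices to transfer membership from $[B/A]$ to $[A]$.

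I would finish with three subcases. If $B$ is abelian of exponent exceeding $p$, Lemma \ref{gp-sole} gives $[A]\simeq[B/A]$, so $[A]\in\B_0(\Fp G)$. If $B$ is elementary abelian --- the case outside Lemma \ref{gp-sole} --- then $[B]$ is an extension of $[B/A]$ by $[A]$, and it is non-split: a splitting would give a nontrivial normal subgroup of $G$ meeting $A$ trivially, contradicting the uniqueness of $A$. Hence $[A]$ and $[B/A]$ are directly linked and share a block, so $[A]\in\B_0(\Fp G)$. The main obstacle is the remaining case, $B$ non-abelian, where Lemma \ref{gp-sole} only exhibits $[A]$ as a quotient of $[B/A]\otimes[B/A]$. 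To close this case I would invoke the block-theoretic principle that the principal block is the identity for the tensoring of blocks: if $V,W\in\B_0(\Fp G)$ then every composition factor of $V\otimes W$ lies in $\B_0(\Fp G)$. Applied with $V=W=[B/A]$, this puts the quotient $[A]$ in $\B_0(\Fp G)$ and completes the induction. (Lemma \ref{dual}, the closure of $\B_0(\Fp G)$ under duals, is a special consequence of the same principle.)
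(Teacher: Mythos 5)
Most of your argument is sound and elementary: the reduction to $B=1$, the reduction to $A$ being the unique minimal normal subgroup, the complemented case via Lemma \ref{compl}, the transfer of $[B/A]$ into $\B_0(\Fp G)$ by induction and Lemma \ref{qgpblock}, and the two abelian subcases for $B$ are all correct. But the proof fails at exactly the point you call the main obstacle. The ``block-theoretic principle'' you invoke --- that every composition factor of $V\otimes W$ lies in $\B_0(\Fp G)$ whenever $V,W\in\B_0(\Fp G)$ --- is \emph{false} for finite groups in general: for $G=A_5$ and $p=5$, the $3$-dimensional simple module lies in the principal block, yet its tensor square has the $5$-dimensional Steinberg module as a composition factor, and that module is projective, hence lies in a block of defect zero. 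The principle is true for $p$-soluble groups, but that is a genuinely deep fact: it rests on Fong's theory of blocks of $p$-soluble groups, via the characterization that the irreducibles in $\B_0(\Fp G)$ are exactly those on which $O_{p'}(G)$ acts trivially, which in turn requires the theorem that a $p$-constrained group with trivial $p'$-core has a unique block. You neither prove this nor restrict the scope of your ``principle'' to where it holds, and nothing in the paper's toolkit (Lemmas \ref{qgpblock}--\ref{compl}) delivers it. So the non-abelian case, and with it the whole induction, is not closed.

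The paper's own proof avoids this entirely, and in fact never needs Lemma \ref{gp-sole}. After reducing to $B=1$ by Lemma \ref{qgpblock}, it splits into two cases: if $[A]\in\B_0(\Fp(G/A))$, then Lemma \ref{qgpblock} finishes; if $[A]\notin\B_0(\Fp(G/A))$, then block orthogonality gives $H^n(G/A,A)=\Ext^n_{\Fp(G/A)}(\Fp,[A])=0$ for all $n$, so in particular $H^2(G/A,A)=0$, the extension of $G/A$ by $A$ splits, $A$ is a complemented chief factor, and Lemma \ref{compl} applies. The elementary fact that $\Ext$ vanishes between modules in distinct blocks is what you should use in place of the tensor-product statement: in your setting where $G$ does not split over the unique minimal normal subgroup $A$, the non-vanishing of $H^2(G/A,A)$ already forces $[A]\in\B_0(\Fp(G/A))$, which collapses all three of your subcases (and the appeal to Lemma \ref{gp-sole}) into one line.
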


\begin{proof}  The result holds trivially if $|G|=p$.  We use induction over $|G|$.  By Lemma \ref{qgpblock}, we may suppose that $B = 1$ and that $[A] \notin \B_0(\Fp(G/A)$.  But then $H^n(G/A,A)=0$ for all $n$ and $A$ is a complemented $p$-chief factor. The result follows by Lemma \ref{compl}.
\end{proof}

\begin{lemma} \label{eval} Let $V,W$ be $\Fp G$-modules.  Then the evaluation map $$\epsilon: V \otimes \Hom(V,W) \to W$$
given by $\epsilon(v\otimes f) = f(v)$ is a module homomorphism.
\end{lemma}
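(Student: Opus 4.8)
The plan is to prove this in two routine steps, since the content is entirely the bookkeeping of module structures: first I would check that $\epsilon$ is well defined as an $\Fp$-linear map, and then verify that it commutes with the $G$-action.

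For well-definedness, observe that the pairing $(v,f) \mapsto f(v)$ from $V \times \Hom(V,W)$ to $W$ is $\Fp$-bilinear, being additive and $\Fp$-linear separately in $v$ and in $f$ because evaluation respects sums and scalars in each slot. By the universal property of $\otimes$ over $\Fp$, it therefore factors uniquely through a linear map $\epsilon$ on $V \otimes \Hom(V,W)$ with $\epsilon(v \otimes f) = f(v)$. Because the simple tensors $v \otimes f$ span $V \otimes \Hom(V,W)$ and $\epsilon$ is additive, it is enough to verify $G$-equivariance on these generators.

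For equivariance I would use the standard module structures, namely the diagonal action $g(v \otimes f) = (gv)\otimes(gf)$ on the tensor product and the action $(gf)(u) = g\,f(g^{-1}u)$ on $\Hom(V,W)$. Then for $g \in G$ the computation
$$\epsilon\bigl(g(v\otimes f)\bigr) = (gf)(gv) = g\,f\bigl(g^{-1}(gv)\bigr) = g\,f(v) = g\,\epsilon(v\otimes f)$$
gives the result. The only point to watch is the $g^{-1}$ appearing in the action on $\Hom(V,W)$: it is precisely this factor that cancels the $g$ coming from the diagonal action on the $V$-slot. Consequently there is no genuine obstacle beyond matching the conventions for the two module structures correctly, and the whole argument reduces to this single line of computation.
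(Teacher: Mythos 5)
Your proof is correct and takes essentially the same approach as the paper: the key step, the computation $\epsilon\bigl(g(v\otimes f)\bigr) = (gf)(gv) = g\,f(g^{-1}gv) = g\,\epsilon(v\otimes f)$, is exactly the paper's one-line verification of $G$-equivariance. Your additional check of well-definedness via the universal property of the tensor product is routine bookkeeping that the paper leaves implicit.
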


\begin{proof} For $x \in G$, $\epsilon x(v \otimes f) = \epsilon(xv \otimes  xf) = (xf)(xv) =  xf(x^{-1}xv) = x \epsilon(v \otimes f)$.
\end{proof}

\begin{theorem} \label{tens} Suppose $V,W$ are irreducible $\Fp G$-modules and that there exists a non-split extension of $W$ by $V$.  Then $W$ is a quotient of $V \otimes A$ for some $A \in \B_0(\Fp G)$.
\end{theorem}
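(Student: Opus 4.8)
The plan is to recast the desired surjection through the evaluation map of Lemma~\ref{eval} and then to locate a suitable simple module inside $\Hom(V,W)$ by translating the hypothesis into group cohomology. A non-split extension of $W$ by $V$ is exactly the assertion $\Ext^1_{\Fp G}(W,V)\ne 0$, and to prove the theorem it suffices to produce an irreducible $A\in\B_0(\Fp G)$ together with a non-zero homomorphism $\iota\colon A\to\Hom(V,W)$. Indeed, given such an $\iota$, the composite $V\otimes A\to V\otimes\Hom(V,W)\xrightarrow{\epsilon}W$ is a module homomorphism by Lemma~\ref{eval}; choosing $a\in A$ with $\iota(a)\ne 0$ and then $v\in V$ with $\iota(a)(v)\ne 0$ shows this composite is non-zero, so its image is a non-zero submodule of the irreducible $W$, hence all of $W$. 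Thus $W$ is a quotient of $V\otimes A$, as required.

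So the whole problem reduces to finding a composition factor of $\Hom(V,W)=V^\ast\otimes W$ lying in the principal block. Here I would pass to cohomology: for any modules one has $\Ext^1_{\Fp G}(W,V)\cong H^1(G,\Hom(W,V))$, the same identification already used in Lemmas~\ref{diffK} and~\ref{compl}. Since $H^1(G,-)=\Ext^1_{\Fp G}(\Fp,-)$ and $\Fp\in\B_0(\Fp G)$, and since there are no non-zero extensions between modules in different blocks, only the principal-block component of $\Hom(W,V)$ can contribute to this cohomology. Hence $\Ext^1_{\Fp G}(W,V)\ne 0$ forces the principal-block component of $\Hom(W,V)$ to be non-zero; equivalently, $\Hom(W,V)$ has a composition factor in $\B_0(\Fp G)$.

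It remains only to transfer this from $\Hom(W,V)$ to $\Hom(V,W)$, which are dual modules since $\Hom(V,W)=\Hom(W,V)^\ast$. The composition factors of a dual module are the duals of the original composition factors, so by Lemma~\ref{dual} (that $\B_0(\Fp G)$ is closed under duality) $\Hom(V,W)$ also has a composition factor in the principal block. Its principal-block component is therefore a non-zero direct summand, and any irreducible submodule $A$ of that summand is the module sought in the first paragraph. I expect the main obstacle to be the middle step: one must argue cleanly that $\Ext^1_{\Fp G}(W,V)\ne 0$ sees \emph{only} the principal block, i.e.\ that $H^1(G,U)$ depends solely on the principal-block component of $U$. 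This is where block orthogonality of extensions, rather than any explicit cocycle computation, does the work; the evaluation-map and duality steps are then routine.
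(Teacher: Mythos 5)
Your proof is correct and follows essentially the same route as the paper: deduce from the non-vanishing of $H^1$ that $\Hom(V,W)$ has a composition factor in $\B_0(\Fp G)$, take an irreducible submodule $A$ of the $\B_0$-component, and map $V \otimes A$ onto $W$ via the evaluation map of Lemma~\ref{eval}. The only deviation is one of convention: the paper reads ``non-split extension of $W$ by $V$'' as $H^1(G,\Hom(V,W)) = \Ext^1_{\Fp G}(V,W)\ne 0$ (compare the proofs of Lemma~\ref{diffK} and Theorem~\ref{chiefs}), so it needs no dualization, whereas you read it as $\Ext^1_{\Fp G}(W,V)\ne 0$ and correctly bridge the gap with Lemma~\ref{dual}; your argument (which also makes explicit the block-orthogonality step the paper leaves implicit) is valid under either reading.
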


\begin{proof} Since $H^1(G, \Hom(V,W)) \ne 0$, $\Hom(V,W)$ must have some composition factor in $\B_0(\Fp G)$.  Let $B$ be the $\B_0$-component of $\Hom(V,W)$ in its block decomposition.  Then $B \ne 0$.  Take a minimal submodule $A \subseteq B$.  Then $\epsilon(V\otimes A) \ne 0$, so $\epsilon(V \otimes A) = W$. 
\end{proof}

\begin{theorem} \label{chiefs}  Let $C$ be the set of the $p$-chief factor modules of the $p$-soluble group $G$ and their duals.  Let $V$ be an irreducible $\Fp G$-module in $\B_0(\Fp G)$.  Then $V$ is a composition factor of some tensor product $C_1 \otimes \dots \otimes C_k$ of modules $C_i \in C$.
\end{theorem}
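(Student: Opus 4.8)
The plan is to prove, by induction on $|G|$, that every irreducible $V\in\B_0(\Fp G)$ is a composition factor of some tensor product of members of $C$; write $\mathcal T$ for the class of all modules so arising. Two closure properties of $\mathcal T$ will be used throughout. Since $C$ is closed under duals and dualising carries composition factors to composition factors, $\mathcal T$ is closed under duals; and if $M,B\in\mathcal T$ then every composition factor of $M\otimes B$ lies in $\mathcal T$, because $M\otimes B$ is a subquotient of the tensor product of the two products of members of $C$ realising $M$ and $B$. The base case $V=\Fp$ is clear: if $p$ divides $|G|$ then $\Fp$ is a quotient of $[D]\otimes[D]^{*}$ for any $p$-chief factor $D$ (via the trace form), while if $p$ does not divide $|G|$ then $\B_0(\Fp G)=\{\Fp\}$ and $\Fp$ is the empty product.

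For the inductive step, fix $V\neq\Fp$ in $\B_0(\Fp G)$ and a minimal normal subgroup $A$ of $G$. By the definition of the principal block there is a chain $\Fp=A_0,A_1,\dots,A_n=V$ of irreducibles in $\B_0(\Fp G)$ whose consecutive terms are linked by non-split extensions $X_i$. By Corollary \ref{B0ker}, $A$ acts trivially on every $A_i$. I would show that the two ends of each link lie in $\mathcal T$ together; since $A_0=\Fp\in\mathcal T$ and the chain is connected, this forces $V=A_n\in\mathcal T$.

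Consider one link, written $0\to M\to X_i\to N\to0$ with $\{M,N\}=\{A_{i-1},A_i\}$. If $A$ acts trivially on $X_i$, then $X_i$ is a non-split $\Fp(G/A)$-extension, so Theorem \ref{tens} applied to $G/A$ makes $N$ a composition factor of $M\otimes B$ for some $B\in\B_0(\Fp(G/A))$; as $|G/A|<|G|$, induction gives $B\in\mathcal T(G/A)$, and since every $p$-chief factor of $G/A$ is one of $G$ we have $\mathcal T(G/A)\subseteq\mathcal T$, so $M\in\mathcal T\Rightarrow N\in\mathcal T$. If instead $A$ acts non-trivially on $X_i$---which forces $A$ to be a $p$-group, since a $p'$-group with trivial composition factors acts trivially---then for $a\in A$ the map $\delta_a\colon\bar x\mapsto ax-x$ is a well-defined element of $\Hom(N,M)$, and $[a]\mapsto\delta_a$ is a non-zero $G$-homomorphism $[A]\to\Hom(N,M)$, that is, a non-zero element of $\Hom_G([A]\otimes N,M)$; as $M$ is irreducible, $M$ is a quotient of $[A]\otimes N$ with $[A]\in C$. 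In either case one of the implications between $M\in\mathcal T$ and $N\in\mathcal T$ holds, and applying the same argument to the dual sequence $0\to N^{*}\to X_i^{*}\to M^{*}\to0$, which is of the same type, gives the converse. Hence the two ends of each link agree, and tracing membership along the chain from $\Fp$ to $V$ completes the induction.

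The step I expect to be the main obstacle is the case where $A$ is a $p$-group acting non-trivially on $X_i$: there $X_i$ is not inflated from $G/A$, and $N$ may lie outside $\B_0(\Fp(G/A))$, so the inductive hypothesis on the smaller group gives no purchase. This genuinely occurs---already for $G=C_7\rtimes C_3$ with $p=7$, where $\B_0(\Fp G)$ contains non-trivial modules sitting in non-principal blocks of $\Fp(G/A)$. The role of $\delta$ is precisely to absorb this discrepancy: crossing such a link costs only a tensor factor $[A]$, the bottom $p$-chief factor, which is exactly the member of $C$ that $G/A$ has lost. The one calculation that must be done carefully is that $[a]\mapsto\delta_a$ is $G$-equivariant for the action $g[a]=[gag^{-1}]$ on $[A]$, and is non-zero exactly when $A$ acts non-trivially on $X_i$.
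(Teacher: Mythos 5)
Your proposal is correct, and its skeleton matches the paper's: walk along the chain linking $\Fp$ to $V$, use Corollary \ref{B0ker} to see that the minimal normal subgroup $A$ acts trivially on every module in the chain, handle links that descend to $G/A$ via Theorem \ref{tens} plus induction on the group order, and handle the remaining links by producing a non-zero $G$-homomorphism $[A] \to \Hom(N,M)$ and evaluating. The genuine difference is in that last step. The paper splits cases according to whether a non-split $\Fp(G/A)$-extension of the two ends exists, and in the bad case argues cohomologically: $H^1(G/A,\Hom(W,V))=0$ together with $H^1(G,\Hom(W,V))\neq 0$ forces, via the Hochschild--Serre spectral sequence, a non-zero element of $H^1(A,\Hom(W,V))^G = \Hom_G([A],\Hom(W,V))$. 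You instead split on whether $A$ acts trivially on the given extension $X_i$ --- a cleaner, manifestly exhaustive dichotomy --- and in the non-trivial case you build the map by hand as $\delta_a(\bar x) = ax - x$, which is exactly the restriction to $A$ of the extension cocycle; your condition makes its non-vanishing immediate, with no spectral sequence needed. So your route buys elementarity and a tidier case division, at the cost of the short calculation (well-definedness, additivity in $a$, $G$-equivariance) that you correctly flag as the point needing care. One slip worth noting: you state Theorem \ref{tens} with the roles reversed --- in the paper's convention a non-split extension $0 \to M \to X \to N \to 0$ makes the \emph{submodule} $M$ a quotient of $N \otimes B$ with $B \in \B_0$, not $N$ a composition factor of $M \otimes B$. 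This is harmless in your argument, since you prove both implications for each link by also dualising the sequence, so whichever direction the theorem actually supplies, the equivalence of the two ends still follows; but as written the sentence misquotes the theorem. The base case via the evaluation (trace) map $[D]\otimes[D]^* \twoheadrightarrow \Fp$, and the closure of your class $\mathcal{T}$ under duals and under taking composition factors of tensor products, are both correct and are implicitly used in the paper as well.
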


\begin{proof} By induction over the length of the sequence linking $V$ to $F$, we may suppose that we have a non-split extension $X$ of $V$ by $W$ or of $W$ by $V$ with $W$ a composition factor of some tensor product of modules in $C$.  Since for modules $M,N$, $(M \otimes N)^* \simeq M^* \otimes N^*$, by Lemma \ref{dual}, we need only consider the case where  $X$ is a non-split extension of $V$ by $W$. 

Let $A$ be a minimal normal subgroup of $G$.  By Corollary \ref{B0ker}, $V$ and $W$ are $\Fp( G/A)$-modules.  If $X$ also is an $\Fp(G/A)$-module, then $V,W$ are in the same $\Fp(G/A)$-block and by Theorem \ref{tens}, $V$ is a quotient of $B \otimes W$ for some $B \in \B_0(L/A)$.  But by induction over $\dim(L)$, $B$ is a composition factor of some tensor product of $p$-chief factor modules of $G/A$ and their duals.  Thus the result holds in this case.

Now suppose that no non-split $\Fp(G/A)$-module extension of $V$ by $W$ exists.  Then $H^1(G/A, \Hom(W,V))=0$.  That is, $H^1(G/A, \Hom(W,V)^A) = 0$ as $A$ acts trivially on $\Hom(W,V)$.  But $X$ is a non-split $\Fp G$-module extension of $V$ by $W$, so  $H^1(G,\Hom(W,V)) \ne 0$.  By the Hochschild-Serre spectral sequence,  and we must have $H^1(A,\Hom(W,V))^G \ne 0$, so $A$ cannot have order prime to $p$.  So $A$ is an abelian $p$-group which acts trivially on $\Hom(W,V)$. Therefore $H^1(A, \Hom(W,V)) = \Hom([A], \Hom(W,V))$ and it follows that we have a non-zero $\Fp G$-module homomorphism $f: A \to \Hom(W,V)$.  Then $f(A)$ is a nonzero submodule of $\Hom(W,V)$ and the evaluation map $\epsilon$ maps $f(A) \otimes W$ onto $V$.  The result follows.
\end{proof}

\section{The conditions} \label{cdits}
Let $\fX$ be a class of primitive algebras.
\begin{definition} We say that $\fX$ is \textit{primitive quotient closed} if, for every $P \in \fX$, also every primitive quotient of $P$ is in $\fX$.
\end{definition}
That $\fX$ is primitive quotient closed is necessary and sufficient for $\pdef(\fX)$ to be a Schunck class.  If $\fF = \pdef(\fX)$ is a formation, then for every chief factor $A/B$ of $P$, we must have also the split extension $Q$ of $A/B$ by $P/\cser_P(A/B) \in \fX$.  
\begin{definition}
If, for every $P \in \fX$ and every chief factor $A/B$ of $P$, the split extension of $A/B$ by $P/\cser_P(A/B)$ is in $\fX$, we say that $\fX$ is \textit{chief factor closed}.
\end{definition}

If $\fX$ is chief factor closed, then clearly, it is primitive quotient closed.

If $\fF = \pdef(\fX)$ is a saturated formation, then the dual of an $\fF$-central module is $\fF$-central.  Thus for $P \in \fX$ with $A = \Soc(P)$, we must have that the split extension of the dual $\Hom(A,F)$ of $A$ by $P/A$ is in $\fX$.  

\begin{definition} We say that $\fX$ is \textit{dual closed} if, for every $P \in \fX$, the split extension of the dual $\Hom(A,F)$ of $A = \Soc(P)$ by $P/A$ is in $\fX$.  
\end{definition}

\begin{definition}  (For Leibniz algebras.  The condition is meaningless and to be regarded as always satisfied in the other cases.)  We say that $\fX$ is \textit{paired} if, for every $P \in \fX$, both members of the pair $(\sym P, \asym P)$ are in $\fX$.
\end{definition}

Let $P,Q \in \fX$ with $A = \Soc(P)$ and $B=\Soc(Q)$.  Let $L$ be a subdirect sum of $P/A$ and $Q/B$.  Then $A,B$ are $L$-modules.  Let $C$ be a composition factor of $A\otimes B$ and let $R$ be the split extension of $C$ by $L/\cser_L(C)$.  We call $R$ a \textit{subtensor product} of $P$ and $Q$.  If $\fF$ is a saturated formation, then $A \otimes B$ is an $\fF$-hypercentral $L$-module and so we must have $R \in \fX$.  

\begin{definition}If, for all $P,Q \in \fX$, every primitive subtensor product $R$ of $P$ and $Q$ is in $\fX$, we say that $\fX$ is \textit{subtensor closed}.
\end{definition}

\begin{definition}  Let $\fX$ be a class of primitive algebras.  We say that the chief factor $A/B$ of $L$ is $\fX$-central if the split extension of $A/B$ by $L/\cser_L(A/B)$ is in $\fX$.
\end{definition}

\begin{lemma} \label{subtens}  Suppose that $\fX$ is a chief factor, dual and subtensor closed and paired class of primitive algebras.  Let $\fF$ be the class of algebras all of whose chief factors are $\fX$-central.  Then $\fF$ is the saturated formation $\pdef(\fX)$.
\end{lemma}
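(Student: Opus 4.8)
The plan is to show that $\fF$ is a saturated formation whose class of primitives is exactly $\fX$; since a Schunck class is determined by its primitives and $\pdef(\fX)$ is the Schunck class with primitives $\fX$, the identification $\fF=\pdef(\fX)$ will then be automatic, with no separate verification of the two inclusions. First I would pin down the primitives of $\fF$. If $P\in\fX$ is primitive, then chief factor closure says precisely that every chief factor of $P$ is $\fX$-central, so $P\in\fF$; conversely, if $P\in\fF$ is primitive with $A=\Soc(P)$, then the socle chief factor $A$ is $\fX$-central, and since $P$ splits over $A$ with $\cser_P(A)=A$, the split extension of $A$ by $P/A$ is $P$ itself, so $P\in\fX$. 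Closure of $\fF$ under quotients is then routine: for a chief factor $A/B$ of $L/M$ one has $\cser_{L/M}(A/B)=\cser_L(A/B)/M$ and $(L/M)/\cser_{L/M}(A/B)\cong L/\cser_L(A/B)$, so the associated split extension is unchanged and $\fX$-centrality is inherited.

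The technical engine I would isolate is the claim that if $L\in\fF$ then every irreducible module in the principal block $\B_0(L)$ is $\fX$-central. Granting the results of Section \ref{gps} and their analogues in the other cases, Theorem \ref{chiefs} writes such a module as a composition factor of a tensor product $C_1\otimes\dots\otimes C_k$ in which each $C_i$ is a chief factor module of $L$ or its dual; the chief factor modules are $\fX$-central by hypothesis, their duals are $\fX$-central by dual closure (using Lemma \ref{dual} to remain in $\B_0(L)$), and subtensor closure, applied repeatedly with quotients of $L$ as the subdirect algebra, carries $\fX$-centrality through the tensor product to each composition factor. With this claim in hand I would treat the two remaining closures. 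For subdirect products I take $L$ subdirect in $(L/M_1)\times(L/M_2)$ with $M_1\cap M_2=0$ and $L/M_i\in\fF$; a chief factor $V$ of $L$ lies in $\B_0(L)$ by Theorem \ref{chiefsB0}, and I would realise $V$ as a composition factor of $A\otimes B$ with $A$ an irreducible module of $L/M_1$ and $B$ one of $L/M_2$, each lying in the principal block of its factor and hence $\fX$-central by the claim, so that subtensor closure applied to the corresponding primitives of $\fX$ shows $V$ is $\fX$-central. For saturation I take $L/\Phi(L)\in\fF$: chief factors above $\Phi(L)$ are chief factors of $L/\Phi(L)$ and so are $\fX$-central, while a Frattini chief factor $A/B$ inside $\Phi(L)$ is handled by Lemma \ref{gp-sole}, which expresses its module as a quotient of a tensor product of chief factor modules lying just above it; an induction reduces everything to complemented chief factors, which are $\fX$-central by Lemma \ref{compl} together with the fact that they survive in $L/\Phi(L)$, and dual and subtensor closure propagate $\fX$-centrality back down to $A/B$.

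I expect the main obstacle to be the subdirect-product step, specifically the passage between irreducible modules of the subdirect product $L$ and modules of its two quotients: I must show that a principal-block chief factor $V$ of $L$ is a composition factor of some $A\otimes B$ with $A\in\B_0(L/M_1)$ and $B\in\B_0(L/M_2)$, which amounts to controlling how blocks and irreducibles behave under the embedding $L\hookrightarrow(L/M_1)\times(L/M_2)$. The block-transfer results, Lemma \ref{qgpblock}, Lemma \ref{diffK} and Corollary \ref{B0ker}, are exactly the tools I would marshal here, since they govern when an irreducible stays in the principal block as one moves between $L$ and its quotients; Corollary \ref{B0ker} in particular forces the minimal ideals to act trivially on $V$, which is what lets $V$ be seen through the quotients at all. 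Finally, in the Leibniz case the paired hypothesis is what keeps every split extension formed in these reductions inside $\fX$ when a chief factor may be symmetric or asymmetric, so the argument never leaves $\fX$. Assembling the pieces, $\fF$ is a saturated formation, hence a Schunck class, with primitives $\fX$, and therefore $\fF=\pdef(\fX)$.
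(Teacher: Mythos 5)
Your global strategy is sound, and in one respect cleaner than the paper's: identifying the primitives of $\fF$ as exactly $\fX$ and then invoking the fact that a saturated formation is a Schunck class determined by its primitives replaces the paper's closing minimal-counterexample argument, and your ``engine'' (every principal-block irreducible of an algebra in $\fF$ is $\fX$-central, via Theorem \ref{chiefs} together with dual and subtensor closure) is precisely the machinery the paper runs on. But there is a genuine gap, and it sits at the heart of the lemma: saturation. You cite Lemma \ref{gp-sole} as ``expressing'' a Frattini chief factor as a quotient of a tensor product of chief factor modules lying just above it, but that lemma only does this when the ideal $B$ above the minimal ideal $A$ is nonabelian (case (1)) or abelian of exponent $p^2$ (case (2)); it says nothing in the crucial case where $B$ is elementary abelian --- and for Lie algebras, where $B$ is simply an abelian ideal, this is the generic case. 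There the paper needs Theorem \ref{tens} (\cite[Theorem 1.5]{Blocks} in the algebra cases): after reducing to the situation where $A$ is the \emph{unique} minimal ideal, the module extension $0 \to A \to B \to B/A \to 0$ cannot split, so $A$ is a quotient of $V \otimes (B/A)$ with $V$ in the principal block of $L/B$; only then do Theorem \ref{chiefs} and the closure hypotheses take over. Your engine makes principal-block modules $\fX$-central, but you never supply the bridge that puts the Frattini factor $A$ in contact with the principal block --- that bridge is Theorem \ref{tens}, which your saturation paragraph never invokes. You are also missing the paper's preliminary reductions (induction to make $A$ the unique minimal ideal, and the observation that $A \subseteq \Phi(L)$ forces $B$ to be nilpotent, which is what makes the two-case analysis on $B$ exhaustive); without these the argument cannot even be set up.

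A second, self-inflicted, gap is the subdirect-product step of the formation property. You propose to realize a chief factor $V$ of $L$ (subdirect in $(L/M_1)\times(L/M_2)$) as a composition factor of $A \otimes B$ with $A,B$ principal-block irreducibles of the two quotients, and you yourself flag that you cannot prove this. You do not need it. Refine $0 \subseteq M_1 \subseteq L$ to a chief series: the factors above $M_1$ are chief factors of $L/M_1$, while the factors below $M_1$ are carried $L$-isomorphically, with the same centralizers and hence the same associated split extensions, onto chief factors of $L/M_2$ by the map $L \to L/M_2$, since $M_1 \cap M_2 = 0$; Jordan--H\"older then makes every chief factor of $L$ $\fX$-central. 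This is exactly why the paper disposes of the formation property in a single sentence (``Since $\fF$ is defined in terms of chief factors, $\fF$ is a formation''): your tensor-product detour replaces a routine refinement argument with an unproved claim, and the block-transfer lemmas you marshal for it (Lemma \ref{qgpblock}, Lemma \ref{diffK}, Corollary \ref{B0ker}) are not the tools that close it.
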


\begin{proof}  If the result holds for Lie algebras, then by   \cite[Theorem 3.16, Corollary 3.17]{SchunckLeib}, the result holds for Leibniz algebras.  So we need only prove the result for the other three categories.   Since $\fF$ is defined in terms of chief factors, $\fF$ is a formation.  We have to prove that it is saturated.  Suppose that $A$ is a minimal ideal of the algebra $L$, that $L/A \in \fF$ and that $L$ does not split over $A$.  Then $A \subseteq \Phi(L)$.  We have to prove that $A$ is $\fX$-central.  We use induction over $\dim(L)$. The result holds trivially if $A = L$. Suppose that $B$ is another minimal ideal of $L$.  Then $A+B/B$ is a minimal ideal of $L/B$ and $A/B \subseteq \Phi(L/B)$.  By induction, $A+B/B$ is $\fX$-central.  Thus we may suppose that $A$ is the only minimal ideal of $L$.

Let $B/A$ be a minimal ideal of $L/A$.  Since $A \subseteq \Phi(L)$, it follows that $B$ is nilpotent.  Suppose that $B$ is not abelian.  Then $B' = A$ and we have an epimorphism $\epsilon: B/A \otimes B/A \to A$ defined by $\bar{b}_1 \otimes \bar{b}_2 \mapsto b_1b_2$ (for groups,  $\bar{b}_1 \otimes \bar{b}_2 \mapsto b_1b_2b_1^{-1}b_2^{-1}$ by Lemma \ref{gp-sole}).  Since $\fX$ is subtensor closed, the split extension of $A$ by $L/\cser_L(A)$ is in $\fX$, that is, $A$ is $\fX$-central. 

Now suppose that $B$ is abelian.  Then $B$ is an $L/B$-module which does not split over the submodule $A$.    By \cite[Theorem 1.5]{Blocks} (Lemma \ref{gp-sole}  and Theorem \ref{tens} for groups), $A$ is a quotient of $V \otimes (B/A)$ for some $V$ in the principal block of $L/B$.  But by \cite[Theorem 2.6]{Blocks} (Theorem \ref{chiefs}  for groups), $V$ is a composition factor of a tensor product of chief factors of $L/B$ and their duals.  From the closure properties of $\fX$, it follows that $A$ is $\fX$-central.

Now let $\fH = \pdef(\fX)$.  Then $\fH$ is the class of algebras all of whose {\em complemented} chief factors are $\fX$-central.  Therefore $\fF \subseteq \fH$.  Suppose that $\fF \ne \fH$.  Then we can take $L \in \fH$, $L \notin \fF$ of least possible dimension.  Let $A$ be a minimal ideal of $L$.  Then $L/A \in \fF$ and every chief factor of $L/A$ is $\fX$-central.  But $\fF$ is saturated, so $L$ splits over $A$, $A$ is a complemented chief factor of $L \in \fH$, so $A$ is $\fX$-central.  Therefore $L \in \fF$ contrary to the choice of $L$.
\end{proof}

Denote the nilpotent length of $L$ by $\nlen(L)$.  For groups, we use the $p$-nilpotent length denoted by $p$-$\nlen(L)$.

\begin{theorem} Suppose that $\fX$ is a primitive quotient, dual and subtensor closed and paired class of primitive algebras.  Let $\fF$ be the class of algebras all of whose chief factors are $\fX$-central.  Then $\fF$ is the saturated formation $\pdef(\fX)$.
\end{theorem}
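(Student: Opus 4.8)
The plan is to show that the proof of Lemma \ref{subtens} already yields the result under the present, weaker hypothesis: the chief factor closure of $\fX$ is never actually used, and primitive quotient closure suffices everywhere it might seem to be needed. The theorem then follows, and, as a byproduct, $\fX$ turns out to be chief factor closed after all.

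First I would record two consequences of primitive quotient closure. Since every primitive quotient of a member of $\fX$ again lies in $\fX$, each $P\in\fX$ has all of its primitive quotients in $\fX$, so $\fX\subseteq\pdef(\fX)$. Moreover every primitive quotient $L/N$ of an algebra $L$ is the split extension of its socle $S/N$, a complemented chief factor, by $L/S$, and conversely any complemented chief factor may be pushed to the bottom to realise its standard primitive as a primitive quotient of $L$; hence
$$\pdef(\fX)=\{\,L\mid\text{every complemented chief factor of }L\text{ is }\fX\text{-central}\,\},$$
a description that needs only that $\fX$ be primitive quotient closed.

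Next I would reproduce the argument of Lemma \ref{subtens}. That $\fF$ is a formation requires nothing beyond its definition by a chief-factor condition. For saturation I take a minimal ideal $A\subseteq\Phi(L)$ with $L/A\in\fF$, reduce to the case that $A$ is the unique minimal ideal, and pick a minimal ideal $B/A$ of $L/A$ with $B$ nilpotent. In the non-abelian case the epimorphism $B/A\otimes B/A\to A$ together with subtensor closure makes $A$ $\fX$-central; in the abelian case Theorem \ref{tens} and Theorem \ref{chiefs} (with Lemma \ref{gp-sole} for groups) exhibit $A$ as a composition factor of a tensor product of chief factors of $L/B\in\fF$ and their duals, whereupon dual and subtensor closure again make $A$ $\fX$-central. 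The point to verify, and the main obstacle, is exactly that these two steps invoke only the dual, subtensor and paired closure of $\fX$: the chief factors being combined are $\fX$-central because they lie in the quotient $L/A\in\fF$, not because $\fX$ is assumed chief factor closed. Establishing that chief factor closure can be dispensed with here is the crux, since the necessary-condition remark shows it cannot be assumed away gratuitously but must instead be recovered from the conclusion.

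Finally, $\fF\subseteq\pdef(\fX)$ is immediate, and the reverse inclusion follows from saturation exactly as in Lemma \ref{subtens}: a minimal counterexample $L\in\pdef(\fX)\setminus\fF$ splits over a minimal ideal $A$ by saturation, so $A$ is a complemented chief factor of $L\in\pdef(\fX)$, hence $\fX$-central, forcing $L\in\fF$. Thus $\fF=\pdef(\fX)$ is a saturated formation. Since $\fX\subseteq\pdef(\fX)=\fF$, every chief factor of every $P\in\fX$ is $\fX$-central; that is, $\fX$ is, a posteriori, chief factor closed, in harmony with the necessary condition recorded earlier.
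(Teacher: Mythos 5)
Your proof is correct, but it takes a genuinely different route from the paper's. You re-open the proof of Lemma \ref{subtens} and observe that its chief-factor-closure hypothesis is never actually invoked: at every point where membership of a split extension in $\fX$ is required, the module in question is a chief factor of $L/A$ (or of $L/B$), and these are $\fX$-central by the \emph{definition} of $\fF$, since $L/A\in\fF$ is part of the saturation hypothesis; the only closure properties consumed are dual closure (for the duals of chief factors of $L/B$), subtensor closure (iterated for composition factors of the tensor products, for $V\otimes(B/A)$, and for the epimorphism $B/A\otimes B/A\to A$ in the non-abelian case), and pairedness (for the Leibniz reduction). That check is indeed the crux, and it does hold, so Lemma \ref{subtens} remains true with ``chief factor closed'' weakened, the theorem follows, and chief factor closure of $\fX$ drops out a posteriori from $\fX\subseteq\pdef(\fX)=\fF$ --- the one place your argument uses primitive quotient closure. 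The paper instead treats Lemma \ref{subtens} as a black box: it supposes $\fX$ is not chief factor closed, chooses $P\in\fX$ with a non-$\fX$-central chief factor and $n=\nlen(P)$ least (for groups, minimising $p$-$\nlen(P)$ over $P$ and $p$), and forms $\fX_0=\{L\in\fX\mid\nlen(L)<n\}$; by minimality $\fX_0$ \emph{is} chief factor closed and inherits the other closures, so the lemma applies to $\fX_0$, and primitive quotient closure places $P/\Soc(P)$ in $\pdef(\fX_0)$, forcing every chief factor of $P/\Soc(P)$ to be $\fX_0$-central, while $\Soc(P)$ is $\fX$-central because $P$ itself is its split extension --- contradicting the choice of $P$. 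Your route buys a sharper lemma and a more direct proof, avoiding the nilpotent-length bookkeeping (and the extra care needed for groups, where the bound must be arranged for all primes simultaneously); the paper's route avoids re-verifying the lemma's internals, packaging the new content of the theorem in a short, self-contained reduction to the already-proved case.
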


\begin{proof} If $\fX$ is chief factor closed, then the result holds by Lemma \ref{subtens}, so let $P \in \fX$ be a primitive algebra with a chief factor $A/B$ which is not $\fX$-central.  We take $P$ with $n = \nlen(P)$ least possible.  (For groups, we choose $P$ and $p$ to make $n=p$-$\nlen(P)$ as small as possible.)  Let $\fX_0$ be the class of algebras $L \in \fX$ with $\nlen(L) < n$.  (For groups, with $p$-$\nlen(L) < n$ for all $p$.)  Then $\fX_0$ is chief factor, dual and subtensor closed and paired.  By Lemma \ref{subtens}, $\pdef(\fX_0)$ is a saturated formation.  But $P/\Soc(P) \in \pdef(\fX_0)$, so every chief factor of $P/\Soc(P)$ is $\fX_0$-central, contrary to hypothesis.
\end{proof}

\begin{cor}  Let $\fH$ be a Schunck class and let $\fX$ be the class of primitive algebras in $\fH$.  Suppose that $\fX$ is  dual and subtensor closed and paired.  Then $\fH $  is a saturated formation.
\end{cor}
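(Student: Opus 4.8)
The plan is to derive this immediately from the preceding Theorem, since the Corollary merely repackages that result in the language of Schunck classes rather than in terms of the defining class of primitives. First I would recall the two structural facts about Schunck classes noted in the introduction: that every Schunck class $\fH$ is primitively defined by its class of primitives, so that $\fH = \pdef(\fX)$, and that the class $\fX$ of primitives of any Schunck class is automatically primitive quotient closed. These are exactly Schunck's original observations (see Doerk and Hawkes \cite{DH} for the group case, with the analogues for Lie, restricted Lie and Leibniz algebras supplied by the cited references).

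With these in hand, the verification of the hypotheses of the Theorem is routine. The Theorem requires $\fX$ to be primitive quotient closed, dual closed, subtensor closed, and paired. The last three are assumed outright in the statement of the Corollary, while the first holds for free precisely because $\fX$ is the class of primitives of a Schunck class. Hence $\fX$ satisfies every hypothesis of the Theorem.

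Applying the Theorem then yields that $\pdef(\fX)$ is a saturated formation. Since $\fH = \pdef(\fX)$, this is exactly the assertion that $\fH$ is a saturated formation, as required. The single point demanding any attention---and it is the only step that is not pure substitution---is the invocation that the primitives of a Schunck class form a primitive quotient closed class; once this standard fact is granted, nothing further is needed and the conclusion follows directly from the Theorem.
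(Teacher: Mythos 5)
Your proof is correct, and it leans on the same result as the paper's --- the preceding Theorem, applied via the two standard Schunck-class facts that $\fH = \pdef(\fX)$ and that $\fX$ is automatically primitive quotient closed. The difference is in how much of the Theorem each proof uses. You take its conclusion at full strength: the class $\fF$ of algebras with all chief factors $\fX$-central \emph{equals} $\pdef(\fX)$ and is a saturated formation, so $\fH = \pdef(\fX) = \fF$ and you are done by substitution. The paper instead borrows only the weaker statement that $\fF$ is a saturated formation, and re-establishes $\fH = \fF$ by hand: since $\fH$ is the class of algebras whose complemented chief factors are $\fX$-central, $\fF \subseteq \fH$; for the reverse inclusion it takes a minimal counterexample $L \in \fH$ with $L \notin \fF$, picks a minimal ideal $A$, notes $L/A \in \fF$, uses saturation of $\fF$ to force $L$ to split over $A$, so that $A$ is a complemented chief factor of $L \in \fH$, hence $\fX$-central, yielding the contradiction $L \in \fF$. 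That argument is essentially a repetition of the final paragraph of the proof of Lemma \ref{subtens}, so your shortcut loses nothing: it simply exploits the identification $\fF = \pdef(\fX)$ that the Theorem's statement already contains rather than reproving it. Both versions ultimately rest on the same background fact (Schunck's theorem that a Schunck class is determined by, i.e.\ primitively defined by, its class of primitives), so yours is a genuine streamlining rather than a hidden gap.
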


\begin{proof} Let $\fF$ be the class of algebras whose chief factors are all $\fX$-central.  As $\fH$ is the class of algebras whose complemented chief factors are $\fX$-central,  $\fF \subseteq \fH$.  We prove that if $L \in \fH$, then $L \in \fF$.  Let $L$ be a minimal counterexample. and let $A$ be a minimal ideal of $L$.  Then $L/A \in \fF$.  But $\fF$ is saturated.  Since $L \notin \fF$, $L$ splits over $A$.  But every complemented chief factor of $L$ is $\fX$-central, so $A$ is $\fX$-central and $L \in \fF$ contrary to assumption.
\end{proof}

\begin{example}  If $\fX$ is the class of primitive algebras  $P$ with $\dim(\Soc(P)) = 1$, then $\fX$ is easily seen to satisfy the conditions.  The class $\pdef(\fX)$ is the class of supersoluble algebras.  (For restricted Lie algebras, the chief factors are either $1$-dimensional or central with the $p$-operation acting invertibly.) 
\end{example}

\begin{example}  Let $\Lambda$ be a normal $F$-subspace of the algebraic closure $\bar{F}$ of $F$.  For groups, $\Lambda_p$ either $\emptyset$ or a subgroup of the multiplicative group of $\Fpbar$.  Let $\fX$ be the class of primitives $P$ for which, for all $x \in P$ and all chief factors $A/B$ of $P$, all eigenvalues of the action of $x$ on $A/B$ are in $\Lambda$.  Then $\fX$ satisfies the conditions and it follows that the class $\Edef(\Lambda)$ of all algebras $L$ such that, for all $x \in L$ and all chief factors $A/B$ of $L$, all eigenvalues of the action of $x$ on $A/B$ are in $\Lambda$ is a saturated formation.
\end{example}

\bibliographystyle{amsplain}

\end{document}